\newtheorem*{thm*}{Theorem}
\newtheorem*{cor*}{Corollary}
\newtheorem*{lem*}{Lemma}
\newtheorem*{def*}{Definition}
\newcommand{\SL}{\operatorname{SL}}
\newcommand{\GL}{\operatorname{GL}}
\newcommand{\SG}{\operatorname{SG}}
\newtheorem{lemma}{Lemma}[section]
\newtheorem{theorem}{Theorem}[section]
\newtheorem*{question*}{Question}
\theoremstyle{definition}
\numberwithin{equation}{section}
\begin{document}


\baselineskip=17pt


\title{Nonexistence of Smooth Effective One Fixed Point Actions of Finite Oliver Groups on Low-dimensional Spheres}

\author{Agnieszka Borowiecka\\
E-mail: aborowiecka@wp.pl
\and 
Piotr Mizerka\\
Faculty of Mathematics and Computer Science\\ 
Adam Mickiewicz University\\
61-614 Poznań\'n, Poland\\
E-mail: piotr.mizerka@amu.edu.pl}

\date{}

\maketitle


\renewcommand{\thefootnote}{}

\footnote{2018 \emph{Mathematics Subject Classification}: Primary 57S17; Secondary 57S25.}

\footnote{\emph{Mathematical discipline of the article}: Manifolds and cell complexes.}

\footnote{\emph{Key words and phrases}: intersection number, smooth-effective one-fixed point actions on spheres}

\renewcommand{\thefootnote}{\arabic{footnote}}
\setcounter{footnote}{0}


\begin{abstract}
According to \cite{Morimoto1998}, a finite group $G$ has a smooth effective one fixed point action on some sphere if and only if $G$ is an Oliver group. For some finite Oliver groups $G$ of order up to $216$, and for $G=A_5\times C_p$, where $p=3,5,7$, we present a strategy of excluding of smooth effective one fixed point $G$-actions on low-dimensional spheres.
\end{abstract}
\section{Introduction}
The paper is organized as follows. In Introduction we give a historical overview and state the main result of the paper. We provide definitions necessary to understand this result. In the next sections we focus on theoretical background. Then, we apply it to an algorithm of excluding of postulated actions. Throughout, unless otherwise specified, all groups are assumed to be finite.

In the paper by an \emph{\textbf{action}} of a group $G$ on a smooth manifold we mean any homomorphism $\ast $ of the group $G$ into group of all diffeomorphisms of $M$. For $g\in G$ and $x\in M$ we will write $g*x$ instead of formally more corrected $\ast(g)(x)$.
The \emph{\textbf{fixed point set}} of such action is the set
	$$
	M^G=\{x\in M:g*x=x \text{ for any } g\in G\}
	$$
In the case when the point fixed set consists of exactly one point we call the action to be \emph{\textbf{one fixed point}} and in the case this set is empty we call the action to be \emph{\textbf {fixed point free}}. An action of a group $G$ on a smooth manifold $M$ is called \emph{\textbf{effective}} if it is a monomorphism.
\par
The first construction of a one fixed point action of a group $G$ on a sphere was obtained by Stein \cite{Stein1977} for $\SL_2(\mathbb{F}_5)$ - the group of $2\times 2$ matrices with determinant $1$ and entries in the field $\mathbb{F}_5$ of five elements. Laitinen, Morimoto and Pawa\l{}owski (\cite{Morimoto1998},\cite{MorimotoPawalowski1995}) proved that a group $G$ admits an effective one fixed point action on some sphere if $G$ is an \emph{\textbf{Oliver group}} which  means that G does not contain a normal subgroup $H$ and a subgroup $P$ such that $P$ is a normal subgroup of $H$, $P$ and $G/H$ are groups of prime power orders and the group $H/P$ is cyclic. Earlier, it was proved by Oliver \cite{Oliver1996} that the above property characterizes groups which have  fixed point free action on a disc.
Using the Slice Theorem (\cite{Audin2004}, Theorem I.2.1), we can transform one fixed point actions on spheres into fixed point free actions on disks by cutting out the equivariant neighbourhood of the fixed point. Therefore, a group $G$ acts with one fixed point on some sphere if and only if it is an Oliver group. It is known that this statement holds also in the case of effective actions. What is the lowest dimension of such a sphere is still unknown for most of Oliver groups. Morimoto \cite{Morimoto1987}, Furuta \cite{Furuta1989}, Buchdahl, Kwasik and Schultz \cite{Kwasik1990} established this dimension to be at least $6$ (in fact, this is the case for the alternating group $A_5$ ).\par
The second author of this paper developed in \cite{Borowiecka2016} methods allowing to exclude one fixed point actions of $\SL_2(\mathbb{F}_5)$ on the sphere of dimension $8$. Applying computations from GAP \cite{GAP4}, we extend these methods to some Oliver groups $G$ of order up to $420$. 
\par 
The following theorem is the main result of this paper.
\begin{theorem}
	\emph{Using the notations of groups from GAP \cite{GAP4} the following  Oliver groups have no effective one fixed point action on $S^n$:}
	\begin{itemize}
		\item $n=6,7,8,10$ \emph{and} $G\in\{\SG(216,90)$, $\SG(216,92)$, $\SG(216,95)\}$, $\SG(216,96)\}$,
		\item $n=6,7,8$ \emph{and} $G\in\{\SG(144,121)$, $\SG(144,124)\}$,
		\item $n=6,7,9$ \emph{and} $G=\SG(126,9)$,
		\item $n=6,7$ \emph{and} $G=\SG(168,49)$,
		\item $n=6,8$ \emph{and} $G\in\{\SL_2(\mathbb{F}_5)$, $\SG(216,91)$, $\SG(216,94)\}$,
		\item $n=6$ \emph{and} $G\in\{A_5\times C_3$, $A_5\times C_5$, $A_5\times C_7$, $S_4\times C_3$, $S_4\times C_6$, $S_4\times C_9$, $S_3\times A_4$, $S_3\times A_4\times C_2$, $S_3\times S_4$, $\SG(72,43)$, $\SG(144,123)$, $\SG(144,126)$, 
		$\SG(144,129)$, $\SG(144,189)$, $\SG(216,93)\}$, 
		\item $n=7$ \emph{and} $G\in\{S_5$, $\GL_2(\mathbb{F})_3\times C_3$, $\SL_2(\mathbb{F}_3)\times S_6$, $\SG(144,125)$, $\SG(144,127)\}$,
		\item $n=7,9,10,11,17$ \emph{and} $G=\GL_3(\mathbb{F}_2)$.
	\end{itemize}
\end{theorem}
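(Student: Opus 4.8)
The plan is to argue by contradiction: for a pair $(G,n)$ from the list, assume $G$ acts smoothly and effectively on $S^n$ with a single fixed point $x_0$. The basic object is the \emph{tangential representation} $V=T_{x_0}S^n$, an $n$-dimensional real $G$-module. Since a finite group action is linearizable near a fixed point, effectiveness makes $V$ a faithful $G$-module; and since $(S^n)^G=\{x_0\}$ is a $0$-dimensional submanifold whose tangent space at $x_0$ is $V^G$, we get $V^G=0$. More generally, for every subgroup $H\le G$ the set $(S^n)^H$ is a closed smooth submanifold with $T_{x_0}(S^n)^H=V^H$, so the component of $(S^n)^H$ through $x_0$ has dimension $\dim V^H$; and when $H$ is a $p$-group, Smith theory makes $(S^n)^H$ a $\mathbb Z/p$-homology sphere, hence connected once $\dim V^H\ge 1$.

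The first step is to reduce to finitely many candidates for $V$. Using the character table of $G$ one lists the faithful real $G$-modules of dimension $n$ with $V^G=0$, and keeps only those whose dimension function $H\mapsto\dim V^H=\tfrac{1}{|H|}\sum_{g\in H}\chi_V(g)$ is compatible with the constraints that Smith theory imposes on the fixed sets of $p$-subgroups: monotonicity ($P\le Q\Rightarrow\dim V^Q\le\dim V^P$), the parity condition that $\dim V^P-\dim V^Q$ is even whenever $P\le Q$ are $p$-groups with $[Q:P]=p$ and $p$ odd, and Borel's formula $n-\dim V^E=\sum_{[E:D]=p}(\dim V^D-\dim V^E)$ for every elementary abelian $p$-subgroup $E$. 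These conditions are checkable in GAP and already discard most modules.

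The heart of the method is an intersection-number obstruction applied to each surviving candidate $V$. Suppose one can find subgroups $H,K\le G$ with $\langle H,K\rangle=G$ and $0<\dim V^H$, $0<\dim V^K$, $\dim V^H+\dim V^K=n$. Let $\Sigma_H$ and $\Sigma_K$ be the components of $(S^n)^H$ and $(S^n)^K$ through $x_0$; these are closed submanifolds of complementary dimensions $a$ and $n-a$ with $0<a<n$, and $\Sigma_H\cap\Sigma_K\subseteq(S^n)^H\cap(S^n)^K=(S^n)^{\langle H,K\rangle}=(S^n)^G=\{x_0\}$, so $\Sigma_H\cap\Sigma_K=\{x_0\}$. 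They meet transversally there, since $V^H\cap V^K=V^{\langle H,K\rangle}=0$ while $\dim V^H+\dim V^K=\dim V$, so that $T_{x_0}\Sigma_H\oplus T_{x_0}\Sigma_K=T_{x_0}S^n$. Hence the mod-$2$ intersection number of $\Sigma_H$ and $\Sigma_K$ equals $1$. But $[\Sigma_H]=0$ in $H_a(S^n;\mathbb Z/2)$ and $[\Sigma_K]=0$ in $H_{n-a}(S^n;\mathbb Z/2)$, since these groups vanish for $0<a<n$, so the homological intersection number is $0$ — a contradiction. Thus no candidate $V$ admitting such a pair $(H,K)$ can be a tangential representation; when every surviving candidate admits one, $G$ has no effective one fixed point action on $S^n$, which is exactly the assertion. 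So the algorithm is: for each $(G,n)$ in the statement, compute in GAP the conjugacy classes of subgroups and the character table, build the candidate list for $V$, and for each candidate search over pairs of subgroups generating $G$ for one meeting the complementary-dimension condition.

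I expect the main obstacle to be that the plain obstruction above does not kill every candidate, so it must be sharpened and combined with case-specific arguments for the stubborn groups (e.g. $\SL_2(\mathbb F_5)$ on $S^8$, or the larger dimensions for $\GL_3(\mathbb F_2)$): one allows $\langle H,K\rangle$ to be any subgroup whose fixed set is finite and bookkeeps all of the transversal intersection points, one passes to the residual $N_G(P)/P$-action on the lower-dimensional $\mathbb Z/p$-homology sphere $(S^n)^P$ and argues by descent, and one exploits finer relations coming from Smith theory. The secondary obstacle is purely one of computational scale: for the order-$216$ groups the subgroup lattices and the number of candidate representations are large enough that the searches must be pruned carefully — working with conjugacy classes of subgroups and applying the Smith-theoretic tests before the intersection search — for the GAP computation to run to completion.
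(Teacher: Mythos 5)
Your intersection--number argument is sound where it applies, and in fact the mod--$2$ version you give is slightly cleaner than what the paper does: by working with $\mathbb{Z}/2$ coefficients in $S^n$ itself you avoid the orientability hypotheses (odd order of $H,K$, or $P\trianglelefteq H_i$ with $H_i/P$ of odd order) that the paper must impose in order to have integral fundamental classes. The trade-off is that the paper's Theorem \ref{strategy:2} performs the intersection count inside $\Sigma^P$ for a possibly nontrivial $p$-group $P\leq H_1\cap H_2$, requiring only $\dim V^{H_1}+\dim V^{H_2}=\dim V^{P}$ rather than $=n$; since a $\mathbb{Z}_p$-homology sphere for odd $p$ need not be a $\mathbb{Z}/2$-homology sphere, your mod--$2$ shortcut does not extend to that setting, and there the integral argument (middle cohomology of $\Sigma^P$ is all torsion, so the relevant cup product dies in $H^{\dim\Sigma^P}(\Sigma^P)\cong\mathbb{Z}$) is genuinely needed.

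The genuine gap is that the intersection obstruction alone does not prove the theorem: the paper relies on a second, independent mechanism, the \emph{discrete submodules restriction} (Theorem \ref{strategy:1}), for the candidates $V$ in which no pair of subgroups has positive-dimensional fixed sets of complementary dimension. This already bites in the paper's own worked example $G=A_5\times C_3$, $n=6$, character $X_{6,1}$: the only nontrivial subgroups with $\dim V^H>0$ all have $\dim V^H=2$, so no pair sums to $6$, and your Smith-theoretic filters (monotonicity, parity, Borel's formula) do not discard this character either --- one checks Borel's formula holds on the $C_3\times C_3$ subgroup. The missing argument runs as follows: choose $H_1,H_2$ in the Dress class $\mathcal{G}(G)$ with $\langle H_1\cup H_2\rangle=G$ and a $p$-subgroup $P\leq H_1\cap H_2$ with $\dim V^P=0$; then $\Sigma^P$ is a $0$-dimensional $\mathbb{Z}_p$-homology sphere, i.e.\ two points, the Morimoto--Tamura congruences force $\chi(\Sigma^{H_i})\in\{0,2\}$, and since each $\Sigma^{H_i}$ is a nonempty subset of the two-point set $\Sigma^P$ one gets $\Sigma^{H_1}=\Sigma^{H_2}=\Sigma^P$, whence $\Sigma^G=\Sigma^{H_1}\cap\Sigma^{H_2}$ has two points --- contradicting $|\Sigma^G|=1$. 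Without this Euler-characteristic argument (which your proposal does not contain, even among the proposed sharpenings), a substantial portion of the cases in the statement cannot be closed.
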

\section{Discrete submodules restriction}
In the sequel all undefined
terms and notations are well known to mathematicians familiar with the theory of group
actions on manifolds. They can be found in \cite{Bredon1972}.

In this section we present a strategy of excluding effective one fixed point actions of groups on spheres which concerns the case when the fixed point sets of actions of some particular subgroups are zero-dimensional. This justifies the name "discrete".

Let us introduce, after Morimoto and Tamura \cite{MorimotoTamura2018}, the following notation. 
\begin{itemize}
	\item $\mathcal{G}_p^q$ - the set of all groups $G$ for which there exists subgroups $P,H\leq G$ such that $P\trianglelefteq H\trianglelefteq G$, $P$ is a $p$-group, $G/H$ is a $q$-group and $H/P$ is cyclic for some primes $p$ and $q$.
	\item $\mathcal{G}=\cup_{p,q}\mathcal{G}_p^q$
	\item $\mathcal{G}_p^q(G)$ - the intersection of $\mathcal{G}_p^q$ with the set of all subgroups of $G$
	\item $\mathcal{G}(G)$ - the intersection of $\mathcal{G}$ with the set of all subgroups of $G$.
\end{itemize}
Then we have a very useful lemma from the work of Morimoto and Tamura.
\begin{lemma}\emph{\cite{MorimotoTamura2018}\label{l:1}
Let $\Sigma$ be a $\mathbb{Z}$-homology sphere with $G$-action and $x_0\in\Sigma^G$. Then we have
\begin{enumerate}[(1)]
	\item For every $H\in\bigcup_p\mathcal{G}_p^1(G)$, $\chi(\Sigma^H)=1+(-1)^{\dim T_{x_0}(\Sigma^H)}$.
	\item  For every $H\in\bigcup_p\mathcal{G}_p^q(G)$, $\chi(\Sigma^H)\equiv 1+(-1)^{\dim T_{x_0}(\Sigma^H)}$ $(\mod q)$.
\end{enumerate}}
\end{lemma}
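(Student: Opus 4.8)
The plan is to derive both statements from the standard Smith theory / Lefschetz fixed point machinery for actions of groups of prime power order, combined with a careful bookkeeping of fixed point sets under the chain $P \trianglelefteq H \trianglelefteq G$. First I would reduce part (1) to a purely $p$-group statement: since $H \in \mathcal{G}_p^1(G)$ exactly means there is $P \trianglelefteq H$ with $P$ a $p$-group and $H/P$ cyclic, I would factor the $H$-action on $\Sigma$ through the action of the finite group $H$ on the $\mathbb{Z}$-homology sphere $\Sigma$ and study $\Sigma^H = (\Sigma^P)^{H/P}$. By Smith theory, $\Sigma^P$ is a $\mathbb{Z}/p$-homology sphere (in particular it has the $\mathbb{Z}/p$-homology of a sphere of some dimension $m \geq -1$, with $m=-1$ meaning $\Sigma^P=\varnothing$), and it is again acted on by the cyclic group $H/P$. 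The key point is that the fixed point set of a cyclic group acting on a $\mathbb{Z}/p$-homology sphere, sitting inside the fixed point set of a finite group, has Euler characteristic $1 + (-1)^{\dim}$ — i.e. it behaves like a sphere for the purpose of computing $\chi$ — and that the local tangential dimension at $x_0$ records which "parity sphere" we land in.

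The precise mechanism I would use is the Lefschetz fixed point theorem together with the fact that for a homeomorphism $f$ of finite order on a space with the $\mathbb{Q}$-homology of $S^m$, the Lefschetz number $L(f)$ equals $1 + (-1)^m \deg(f|_{H_m})$, and when $f$ has a fixed point the sign must be $+1$ so that the fixed set is again a rational homology sphere. Iterating this over a cyclic generator of $H/P$ and using the existence of the global fixed point $x_0 \in \Sigma^H$ (which forces every relevant degree to be $+1$) gives $\Sigma^H$ the $\mathbb{Q}$-homology of a sphere of dimension $\dim T_{x_0}(\Sigma^H)$, whence $\chi(\Sigma^H) = 1 + (-1)^{\dim T_{x_0}(\Sigma^H)}$. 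For part (2), the quotient $G/H$ is merely a $q$-group rather than trivial, so $\Sigma^G = (\Sigma^H)^{G/H}$ is the fixed set of a $q$-group acting on the space $\Sigma^H$ analyzed in part (1); here I would invoke the congruence form of the Euler characteristic relation for $q$-group actions — for a $q$-group $Q$ acting on a finite-dimensional space $Y$ one has $\chi(Y^Q) \equiv \chi(Y) \pmod{q}$ — and more to the point apply it to the chain so that $\chi(\Sigma^H) \equiv \chi((\Sigma^H)^{G/H}) \pmod q$ is replaced by the needed statement relating $\chi(\Sigma^H)$ to the parity of $\dim T_{x_0}$. Concretely, since $H \in \bigcup_p \mathcal{G}_p^q(G)$ means $\Sigma^H = (\Sigma^P)^{H/P}$ with $H/P$ cyclic and $G/H$ a $q$-group, I would run the part-(1) argument at the level of $\mathbb{Z}/q$-coefficients (or directly compare with $\Sigma^{\langle P, \text{preimage}\rangle}$) to obtain the congruence mod $q$.

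The steps in order: (i) record the Smith-theoretic facts that $\Sigma^P$ is a $\mathbb{Z}/p$-homology sphere and that the fixed set of a cyclic group on such a sphere is again one, keeping track of dimensions; (ii) use the presence of $x_0$ to pin down all Lefschetz-degree signs to $+1$, so that $\Sigma^H$ is a homology sphere of dimension exactly $\dim T_{x_0}(\Sigma^H)$ by the Slice Theorem identification of the tangent representation; (iii) read off $\chi(\Sigma^H) = 1 + (-1)^{\dim T_{x_0}(\Sigma^H)}$, giving (1); (iv) for (2), replace "homology sphere" by "$\mathbb{Z}/q$-homology sphere" throughout and use the mod-$q$ Euler characteristic congruence for the residual $q$-group action, yielding the stated congruence. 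I expect the main obstacle to be step (ii): making rigorous the claim that the global fixed point forces the correct parity and that $\dim T_{x_0}(\Sigma^H)$ (a manifold-tangent-space dimension, available via the Slice Theorem on the smooth sphere) coincides with the homological dimension of $\Sigma^H$ as a $\mathbb{Z}/p$-homology sphere — one must be careful that $\Sigma^H$ need not be a manifold, only a homology manifold/homology sphere, so the identification of its "dimension" with the tangential dimension at $x_0$ requires the smoothness hypothesis and the local linearity of the action near $x_0$. The bookkeeping of which primes appear and the compatibility of the two normal inclusions $P \trianglelefteq H \trianglelefteq G$ with the coefficient choices is the other place where care is needed, but it is routine once the single-step case is settled.
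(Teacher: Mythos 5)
The paper contains no proof of this lemma: it is quoted verbatim from Morimoto and Tamura, so there is nothing internal to compare with and your proposal has to stand on its own. Its overall shape — Smith theory for the normal $p$-subgroup, the Lefschetz fixed point theorem for the cyclic quotient, and the mod-$q$ Euler characteristic congruence for the residual $q$-group action — is indeed the right one, but the two places where the actual content of the lemma lives are exactly the places where your argument is wrong or missing.

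First, in part (1) your mechanism for pinning down the sign is incorrect. It is not true that a finite-order homeomorphism of a rational homology $m$-sphere with a fixed point has degree $+1$ (a hyperplane reflection of $S^m$ fixes an $S^{m-1}$ and has degree $-1$; a rotation of $S^3$ has a nonempty fixed circle and Lefschetz number $0$), and the Lefschetz theorem only computes $\chi(\Sigma^H)$ — it does not make $\Sigma^H$ a rational homology sphere. The step you flag as ``the main obstacle'' but never supply is local: writing $m=\dim\Sigma^P$ and $d=\dim T_{x_0}(\Sigma^H)$, a generator $\bar c$ of the cyclic group $H/P$ acts orthogonally on $T_{x_0}\Sigma^P$ after linearization; the complement of the fixed subspace splits into rotation planes and $(-1)$-eigenlines, so $\deg(\bar c)=\operatorname{sign}\det(d\bar c_{x_0})=(-1)^{m-d}$, and hence $\chi(\Sigma^H)=L(\bar c)=1+(-1)^m(-1)^{m-d}=1+(-1)^d$. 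Second, in part (2) you have misread the definition: $H\in\mathcal{G}_p^q(G)$ means that $H$ itself contains a chain $P\trianglelefteq H'\trianglelefteq H$ with $P$ a $p$-group, $H/H'$ a $q$-group and $H'/P$ cyclic — it says nothing about $G/H$ — so the congruence to use is $\chi(\Sigma^H)=\chi\bigl((\Sigma^{H'})^{H/H'}\bigr)\equiv\chi(\Sigma^{H'})\pmod q$ with part (1) applied to $H'$, not a comparison of $\Sigma^H$ with $\Sigma^G$. Even after that you still owe the replacement of $\dim T_{x_0}(\Sigma^{H'})$ by $\dim T_{x_0}(\Sigma^{H})$ in the exponent, which your sketch never addresses: for odd $q$ it holds because every nontrivial real irreducible representation of an odd-order group has even dimension, so $\dim T_{x_0}(\Sigma^{H'})\equiv\dim T_{x_0}(\Sigma^{H})\pmod 2$, while for $q=2$ both sides of the asserted congruence are automatically even.
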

Using Lemma \ref{l:1}, we are able to figure out a strategy of excluding one fixed point actions, given by the following theorem.
\begin{theorem}\label{strategy:1}
\emph{	Let a group $G$ act on a $\mathbb{Z}$-homology sphere $\Sigma$. Suppose there exist subgroups $H_1,H_2\leq \mathcal{G}(G)$ and a primary subgroup $P\leq H_1\cap H_2$. Assume $G=\langle H_1\cup H_2\rangle$ and $\dim T_x\Sigma^P=0$ for some $x\in \Sigma^G$. Then $\Sigma^G$ cannot consist of exactly one point.}
\end{theorem}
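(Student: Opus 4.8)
\noindent\emph{Proof idea.}
The plan is to argue by contradiction: suppose $\Sigma^G=\{x\}$ for the given fixed point $x$, and set $V=T_x\Sigma$, viewed as a real $G$-representation. The hypothesis $\dim T_x\Sigma^P=\dim V^P=0$ says $V^P=0$; since $P\le H_1$ and $P\le H_2$, the inclusions $V^{H_1}\subseteq V^P$ and $V^{H_2}\subseteq V^P$ force $V^{H_1}=V^{H_2}=0$. As fixed point sets of smooth finite-group actions are closed submanifolds of $\Sigma$ whose tangent space at a fixed point is the corresponding fixed subspace of $V$, the component of $\Sigma^{H_i}$ through $x$ is the single point $\{x\}$, so I would write $\Sigma^{H_i}=\{x\}\sqcup F_i$ with $F_i$ a closed submanifold avoiding $x$. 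Since the set of group elements fixing a given point is a subgroup and $G=\langle H_1\cup H_2\rangle$, we get $\Sigma^G=\Sigma^{H_1}\cap\Sigma^{H_2}=\{x\}\sqcup(F_1\cap F_2)$; hence the assumption $\Sigma^G=\{x\}$ is equivalent to $F_1\cap F_2=\varnothing$, and the whole problem reduces to contradicting this.

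Next I would show $F_1,F_2\neq\varnothing$. Each $H_i\in\mathcal G(G)$ lies in some $\mathcal G_{p_i}^{q_i}(G)$. If $q_i$ is trivial, part (1) of Lemma \ref{l:1} gives $\chi(\Sigma^{H_i})=1+(-1)^{\dim V^{H_i}}=2$; if $q_i$ is a prime, part (2) gives $\chi(\Sigma^{H_i})\equiv 1+(-1)^{\dim V^{H_i}}=2\pmod{q_i}$. In either case $\chi(\Sigma^{H_i})\neq 1=\chi(\{x\})$, so $\Sigma^{H_i}\neq\{x\}$ and $F_i\neq\varnothing$. Moreover $P\le H_i$ gives $\Sigma^{H_i}\subseteq\Sigma^P$, so $F_1$ and $F_2$ are nonempty, disjoint subsets of $\Sigma^P\setminus\{x\}$.

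The third step determines $\Sigma^P$ well enough to finish. Since $P$ is a $p$-group acting on the $\mathbb Z$-homology sphere $\Sigma$, Smith theory makes $\Sigma^P$ a $\mathbb Z_p$-homology sphere; it is nonempty (it contains $x$) and its component through $x$ is $0$-dimensional, while a $\mathbb Z_p$-homology sphere of positive dimension is connected, so $\Sigma^P$ is a $\mathbb Z_p$-homology $0$-sphere: $\Sigma^P=\{x\}\sqcup Y$ with $Y$ a nonempty, connected, closed, $\mathbb Z_p$-acyclic manifold, and $F_1,F_2\subseteq Y$. When $p=2$ the argument closes immediately: a closed $\mathbb Z_2$-acyclic manifold must be $0$-dimensional (its top $\mathbb Z_2$-homology is otherwise nontrivial), so $Y$ is a single point $y$, whence $F_1=F_2=\{y\}$ and $F_1\cap F_2=\{y\}\neq\varnothing$, contradicting the first step. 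Hence $\Sigma^G$ cannot consist of exactly one point.

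The hard part is precisely this last step when $p$ is odd: a priori $Y=\Sigma^P\setminus\{x\}$ could be a positive-dimensional $\mathbb Z_p$-acyclic closed manifold (for instance an even-dimensional real projective space when $p=3$), and two nonempty closed submanifolds of a connected manifold need not intersect, so one cannot simply conclude $F_1\cap F_2\neq\varnothing$. Here I expect one must invoke the intersection-number input from \cite{Borowiecka2016} referred to in the title --- computing the intersection of the submanifolds $\Sigma^{H_1}$ and $\Sigma^{H_2}$, which by hypothesis meet only at $x$, and showing that this forces a further common point inside $Y$ --- to obtain the contradiction in full generality.
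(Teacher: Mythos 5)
Your reduction is sound as far as it goes, and it runs parallel to the paper's argument: both rest on Smith theory for $\Sigma^P$ and on Lemma \ref{l:1} applied to $H_1,H_2$ (you use the lemma to get $\chi(\Sigma^{H_i})\neq 1$, the paper to get $\chi(\Sigma^{H_i})\in\{0,2\}$ and hence $=2$; these amount to the same thing once $\Sigma^P$ is known to be finite). But you have left the decisive step open: showing that the second component $Y$ of the mod-$p$ homology $0$-sphere $\Sigma^P$ is a single point when $p$ is odd. The paper simply asserts "since $\dim T_x\Sigma^P=0$, it consists of two points" and then finishes exactly as you would ($\Sigma^{H_1}=\Sigma^{H_2}=\Sigma^P$ is a two-point set, so $\Sigma^G=\Sigma^{H_1}\cap\Sigma^{H_2}$ has two points). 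So you have correctly isolated the one nontrivial point, but your proposed way of closing it --- an intersection-number argument --- is not the right tool: that technique is the content of the separate Theorem \ref{strategy:2} and requires the fixed components $C(H_i)$ to be of \emph{positive} dimension, whereas here everything through $x$ is $0$-dimensional, and an intersection count at $x$ cannot force a second intersection point inside $Y$.

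The standard way to kill your counterexample scenario (e.g.\ $Y\cong\mathbb{RP}^{2k}$ for odd $p$) is orientability: $\Sigma$ is a $\mathbb{Z}$-homology sphere, hence orientable, and by the theorem of Bredon quoted later in the paper (\cite{Bredon1972}, p.~175, Theorem 2.1) the fixed point set of a smooth action of a group of odd order on an orientable manifold is orientable. Thus for $p$ odd every component of $\Sigma^P$ is a closed, connected, \emph{orientable}, $\mathbb{Z}_p$-acyclic manifold $Y$, so $H_{\dim Y}(Y;\mathbb{Z}_p)\cong\mathbb{Z}_p\neq 0$ forces $\dim Y=0$, exactly as in your $p=2$ case. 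With that supplied, your $F_1=F_2=\{y\}$ conclusion goes through and the proof is complete; without it, the argument has a genuine gap precisely where you flagged one.
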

\begin{proof}
	The order of $P$ is a power of some prime $p$. From the Smith theory we deduce, that $\Sigma^P$ is a $\mathbb{Z}_p$-homology sphere. Since $\dim T_x\Sigma^P=0$, we conclude that it consists of two points. As $\Sigma^{H_1}$ and $\Sigma^{H_2}$ are contained in $\Sigma^P$, it follows that $\Sigma^{H_1}$ and $\Sigma^{H_2}$ both consist of at most two points. In particular the Euler characteristics of $\Sigma^{H_1}$ and $\Sigma^{H_2}$ are equal to their cardinalities.
	
	Assume that $\Sigma^G=\text{p}$. The Euler characteristics of $\Sigma^{H_1}$ and $\Sigma^{H_2}$ both are greater or equal $1$. Moreover, from Lemma \ref{l:1}, we deduce $\chi(\Sigma^{H_1}),\chi(\Sigma^{H_2})\in\{0,2\}$. Since these characteristics are positive, we conclude that
	$$
	\chi(\Sigma^{H_1})=\chi(\Sigma^{H_2})=2.
	$$
	This means $\Sigma^{H_1}$, $\Sigma^{H_2}$ and $\Sigma^P$ consist of two points, which in connection with $\Sigma^{H_1},\Sigma^{H_2}\subseteq\Sigma^P$, gives
	$$
	\Sigma^P=\Sigma^{H_1}=\Sigma^{H_2}.
	$$
	However, since $\langle H_1\cup H_2\rangle=G$,
	$$
	\text{p}=\Sigma^G=\Sigma^{H_1}\cap\Sigma^{H_2}=\Sigma^P,
	$$
	which is a contradiction, for $\Sigma^P$ is a two-point set.
\end{proof}
\section{Intersection number restriction}
\label{s:3}
Following \cite{Davis2001}, suppose $M$ is a smooth, compact, oriented manifold of dimension $m$ and $A, B$ are its smooth, compact, connected and oriented submanifolds of dimensions $a$ and $b$ respectively such that $m=a+b$. Moreover, assume $A$ and $B$ are transverse in $M$. Choose a point $x\in A\cap B$. Since $A$ and $B$ are transverse and of complementary dimensions in $M$, we can look at the tangent space to $M$ at the point $x$ in two ways:
\begin{itemize}
	\item as $T_xM$ with basis induced from the orientation of $M$,
	\item as $T_xA\oplus T_xB$ with basis given by the bases of $T_xA$ and $T_xB$ induced by the orientations of $A$ and $B$.
\end{itemize}
Denote by $\eta_x$ the sign of the determinant of change-of-base matrix from $T_xM$ to $T_xA\oplus T_xB$. The \emph{\textbf{intersection number}} of $A$ and $B$ in $M$ is then a value $\sum_{x\in A\cap B}\eta_x$, denoted by $A\cdot B$.
Now, consider the fundamental classes $[A]\in H_a(A)$ and $[B]\in H_b(B)$. They induce elements $(i_A)_*([A])\in H_a(M)$ and $(i_B)_*([B])\in H_b(M)$, where $i_A:A\subset M$ and $i_B:B\subset M$ are inclusions.
The intersection number reveals a very interesting algebraic property which we shall use later.
\begin{theorem}\emph{\cite{Davis2001}\label{t:7}
	For $A, B$ and $M$ are as above, we have
	$$
	A\cdot B=\langle\alpha\cup\beta,[M]\rangle,
	$$
	where $\alpha$, $\beta$ are Poincare duals to $(i_A)_*([A])$ and $(i_B)_*([B])$ respectively, $[M]$ is the fundamental class of $M$ and $\langle,\rangle$ is the Kronecker pairing.}
\end{theorem}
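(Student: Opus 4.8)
The plan is to prove the identity by Poincaré duality together with a localization of the computation near the (finitely many) points of $A\cap B$. First I would recall that, since $M$ is closed and oriented, the Poincaré dual $\alpha$ of $(i_A)_*([A])$ is represented by (the image in $H^b(M)$ of) the Thom class $u_A\in H^b(M,M\setminus A)$ of a closed tubular neighbourhood $N_A$ of $A$, where $b=m-a$ is the codimension of $A$; likewise $\beta$ is represented by the Thom class $u_B\in H^a(M,M\setminus B)$ of a tubular neighbourhood $N_B$ of $B$. This is the standard description of the Poincaré dual of a compact oriented submanifold, and it is here that the orientations of $A$, $B$ and $M$ enter: $u_A$ is normalized by the orientation of the normal bundle $\nu_A$ determined by $o_M=o_A\wedge o_{\nu_A}$, and similarly for $u_B$.

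Next I would use transversality. After shrinking the tubular neighbourhoods, $N_A\cap N_B$ is a disjoint union of small balls $U_1,\dots,U_k$, one around each point $x_i\in A\cap B$. Since $u_A$ is supported in $N_A$ and $u_B$ in $N_B$, the cup product $u_A\cup u_B$ has a representative supported in $N_A\cap N_B$, so that $\langle\alpha\cup\beta,[M]\rangle$ splits as a sum $\sum_{i=1}^{k}\langle (u_A\cup u_B)|_{U_i},\mu_i\rangle$, where $\mu_i\in H_m(U_i,\partial U_i)$ is the local orientation class induced by $[M]$. It therefore suffices to identify each local term with $\eta_{x_i}$.

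For the local term at $x_i$, transversality is used once more to choose coordinates identifying $U_i$ with a ball in $\mathbb{R}^m=\mathbb{R}^a\times\mathbb{R}^b$ so that $A\cap U_i=\mathbb{R}^a\times\{0\}$ and $B\cap U_i=\{0\}\times\mathbb{R}^b$. Under this identification $u_A|_{U_i}$ becomes the pullback of a generator of $H^b$ of the $\mathbb{R}^b$-factor and $u_B|_{U_i}$ the pullback of a generator of $H^a$ of the $\mathbb{R}^a$-factor, so their product is a generator of $H^m(U_i,\partial U_i)$; evaluating on $\mu_i$ gives $\pm 1$, and unwinding the three orientation normalizations shows that this sign equals the sign $\eta_{x_i}$ of the change-of-basis determinant from the $o_M$-basis of $T_{x_i}M$ to the $(o_A,o_B)$-basis of $T_{x_i}A\oplus T_{x_i}B$. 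Summing over $i$ then yields $\langle\alpha\cup\beta,[M]\rangle=\sum_{i}\eta_{x_i}=A\cdot B$.

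The main obstacle is the orientation bookkeeping in the last step: one must pin down the normalization of the Thom classes, of the local classes $\mu_i$ and of the reordering of the $a$- and $b$-dimensional blocks (which should match the reordering sign in $\alpha\cup\beta$ versus $\beta\cup\alpha$) so that the emerging sign is literally the $\eta_{x_i}$ of the stated definition and not its negative. A cleaner but essentially equivalent route I might take instead is to write $\langle\alpha\cup\beta,[M]\rangle=\langle\beta,\alpha\cap[M]\rangle=\langle\beta,(i_A)_*([A])\rangle=\langle i_A^{\,*}\beta,[A]\rangle$ and then argue that $i_A^{\,*}\beta\in H^a(A)$ is the Poincaré dual, within $A$, of the $0$-cycle $\sum_i\eta_{x_i}[x_i]$; this reduces the whole statement to the single fact that the restriction of a Thom class to a transversely intersecting submanifold is the Thom class of the intersection, after which $\langle i_A^{\,*}\beta,[A]\rangle=\sum_i\eta_{x_i}$ is immediate. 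Either way, transversality and the naturality of Thom classes carry the real content, and everything else is formal.
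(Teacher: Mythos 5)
The paper gives no proof of this statement at all---it is imported verbatim from Davis--Kirk \cite{Davis2001}---so there is no in-paper argument to compare against. Your outline (representing the Poincar\'e duals by Thom classes of tubular neighbourhoods, localizing $u_A\cup u_B$ to $H^m(M,M\setminus(A\cap B))\cong\bigoplus_i H^m(U_i,U_i\setminus\{x_i\})$ via transversality, and matching each local evaluation with the sign $\eta_{x_i}$) is a correct sketch of the standard proof, and is essentially the argument found in the cited source; the orientation bookkeeping you flag is indeed the only delicate point, and your alternative reduction to $\langle i_A^{\,*}\beta,[A]\rangle$ is a legitimate way to bypass most of it.
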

We will focus now on ensuring the conditions which allow us to define the intersection number for our particular purposes. 

The following lemma provides conditions ensuring transversality.
\begin{lemma}\label{l:4}
\emph{	Assume a group $G$ acts on a smooth manifold $M$ with $M^G$ connected. Suppose there exist subgroups $H_1, H_2\leq G$ and $H\leq H_1\cap H_2$ such that the following conditions hold:
	\begin{enumerate}
		\item $\langle H_1\cup H_2\rangle=G$,
		\item 
		$$
		\dim C(H_1)+\dim C(H_2)-\dim M^G=\dim C(H),
		$$
		where, for a given subgroup $K\leq G$, $C(K)$ stands for the connected component of $M^K$ containing $M^G$.
	\end{enumerate}
Then $M^{H_1}$ and $M^{H_2}$ are transverse in $M^H$.}
\end{lemma}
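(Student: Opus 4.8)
The plan is to reduce the statement to a linear-algebra computation on the tangent space $T_xM$ at the fixed point $x \in M^G$, using the fact that the three relevant fixed-point submanifolds all pass through $x$. First I would fix a point $x \in M^G$ (which exists and is in every $C(K)$ since $M^G$ is connected and contained in each $M^K$). Because $H \leq H_1 \cap H_2$, we have $M^{H_1}, M^{H_2} \subseteq M^H$, and hence $C(H_1) \subseteq C(H)$ and $C(H_2) \subseteq C(H)$, so it makes sense to ask for transversality of $M^{H_1}$ and $M^{H_2}$ inside $M^H$. Transversality of two submanifolds through a common point $x$ is equivalent to $T_x C(H_1) + T_x C(H_2) = T_x C(H)$, so the whole problem becomes: show these tangent spaces span.

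The key idea is that for a compact group $K$ acting on $M$, the tangent space $T_x M^K$ equals the $K$-fixed subspace $(T_xM)^K$ of the isotropy representation; more precisely, $T_x C(K) = (T_xM)^K$ since $C(K)$ is the component through $x$. Apply this with $K = H_1, H_2, H$. Since $\langle H_1 \cup H_2\rangle = G$, a vector fixed by both $H_1$ and $H_2$ is fixed by all of $G$, so $(T_xM)^{H_1} \cap (T_xM)^{H_2} = (T_xM)^G = T_x M^G$. Therefore, writing $V = T_xM$,
\[
\dim\bigl((V^{H_1} + V^{H_2})\bigr) = \dim V^{H_1} + \dim V^{H_2} - \dim\bigl(V^{H_1}\cap V^{H_2}\bigr) = \dim C(H_1) + \dim C(H_2) - \dim M^G.
\]
By hypothesis (2) this equals $\dim C(H) = \dim V^{H}$. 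Since $V^{H_1} + V^{H_2} \subseteq V^{H}$ (as $H \leq H_1$ and $H \leq H_2$ force $V^{H_1}, V^{H_2} \subseteq V^H$) and the two spaces now have equal dimension, they coincide: $V^{H_1} + V^{H_2} = V^H$. This is exactly the transversality of $M^{H_1}$ and $M^{H_2}$ in $M^H$ at $x$; since $x \in M^G$ was arbitrary and $M^G = M^{H_1} \cap M^{H_2}$ is connected, transversality holds at every intersection point, giving the claim.

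The main obstacle, and the point to state carefully, is the identification $T_x C(K) = (T_xM)^K$ together with the claim $(T_xM)^{H_1} \cap (T_xM)^{H_2} = (T_xM)^G$. The first uses that $K$ is finite (or compact) so that $M^K$ is a smooth submanifold whose tangent space at a fixed point is the fixed subspace of the isotropy representation — this is standard (e.g. Bredon \cite{Bredon1972}). The second needs that $M^{H_1} \cap M^{H_2} = M^{\langle H_1 \cup H_2\rangle} = M^G$, which on tangent spaces reads $V^{H_1} \cap V^{H_2} = V^{\langle H_1 \cup H_2 \rangle} = V^G$; this is immediate since a vector fixed by $H_1$ and by $H_2$ is fixed by every word in their elements. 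One should also note that all intersection points of $M^{H_1}$ and $M^{H_2}$ lie in $M^G$, so connectedness of $M^G$ plus transversality at one (hence every) point of $M^G$ suffices; there are no intersection points outside $M^G$ to worry about.
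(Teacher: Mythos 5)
Your proposal is correct and follows essentially the same route as the paper: reduce to a tangent-space computation at a point of $M^G$, identify $T_xM^K$ with the fixed subspace $(T_xM)^K$ of the isotropy representation (you cite this as standard; the paper derives it from the Slice Theorem), use $\langle H_1\cup H_2\rangle=G$ to get $(T_xM)^{H_1}\cap (T_xM)^{H_2}=(T_xM)^G$, and conclude by the dimension hypothesis. The only cosmetic difference is that you phrase the final step as ``sum contained in $V^H$ of equal dimension, hence equal,'' while the paper states the equivalent dimension identity directly.
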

\begin{proof}
	Pick $x\in M^{H_1}\cap M^{H_2}\overset{\mathrm{\langle H_1\cup H_2\rangle=G}}{=}M^G$. It is enough to show that
	\begin{equation}\label{e:11}
	\dim T_xM^{H_1}+\dim T_xM^{H_2}-\dim(T_xM^{H_1}\cap T_xM^{H_2})=\dim T_xM^H
	\end{equation}
	From the dimension assumption we have
	$$
	\dim T_xM^{H_1}+\dim T_xM^{H_2}-\dim T_x(M^{H_1}\cap M^{H_2})=\dim T_xM^H
	$$
	If we prove that
	\begin{equation}\label{e:2}
	T_x(M^{H_1}\cap M^{H_2})=T_xM^{H_1}\cap T_xM^{H_2}
	\end{equation}
	we get (\ref{e:11}).
	
	From the Slice Theorem there exists an open neighborhood $U$ of $x$ and a $G$-diffeomorphism $f:U\longrightarrow T_xM$.
	
	Let $K\leq G$. We show that
	\begin{equation}\label{e:3}
	f(U^K)=T_xM^K
	\end{equation}
	Since $f$, as a $G$-diffeomorphism, preserves fixed point sets, we have $f(U^K)=(T_xM)^K$. As the action of $K$ on $T_xM$ is linear, it follows that $(T_xM)^K$ is a linear subspace of $T_xM$ and therefore it is connected. Hence $U^K$ is connected. One can consider then the dimensions of $U^K$ and $(T_xM)^K$. The former if equal to $\dim C(K)$. As $f$ is a $G$-diffeomorphism, it preserves the dimension, hence
	$$
	\dim(T_xM)^K=\dim f(U^K)=\dim U^K=\dim C(K).
	$$
	On the other hand, $\dim T_xM^K=\dim C(K)$. Therefore we see that $T_xM^K$ and $(T_xM)^K$ are linear subspaces of $T_xM$ of the same dimension. So, to prove $T_xM^K=(T_xM)^K$, it suffices to show $T_xM^K\subseteq(T_xM)^K$.
	
	Let $[\gamma]\in T_xM^K$, where $\gamma:I\longrightarrow M^K$ is a curve in $M^K$. Pick $k\in K$. We have for $t\in I$
	$$
	k\gamma(t)=\gamma(t)
	$$
	since $\gamma$ is invariant under the action of $K$. Hence $k\gamma=\gamma$ and therefore $k[\gamma]=[k\gamma]=[\gamma]$. We conclude then that $[\gamma]\in (T_xM)^K$ and $T_xM^K\subseteq(T_xM)^K$ and, as a consequence, $T_xM^K=(T_xM)^K$. Hence, indeed
	$$
	f(U^K)=T_xM^K.
	$$
	Using (\ref{e:3}) and the fact that $f$ is injective, we have
	\begin{align*}
	T_xM^G&=f(U^G)=f(M^G\cap U)=f(M^{H_1}\cap M^{H_2}\cap U)\\
	&=f((M^{H_1}\cap U)\cap(M^{H_2}\cap U))=f(U^{H_1}\cap U^{H_2})\\
	&=f(U^{H_1})\cap f(U^{H_2})=T_xM^{H_1}\cap T_xM^{H_2}.
	\end{align*}
	On the other hand
	$$
	T_xM^G=T_x(M^{H_1}\cap M^{H_2}),
	$$
	which completes the proof of (\ref{e:2}).
\end{proof}
Let us care about the orientability assumption.
\begin{lemma}\label{l:2}
\emph{	Any $\mathbb{Z}_p$-homology sphere is orientable.}
\end{lemma}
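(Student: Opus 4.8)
The plan is to reduce the assertion to the standard homological criterion for orientability of a closed manifold and then read off the conclusion from the universal coefficient theorem. Recall that a $\mathbb{Z}_p$-homology sphere means a closed manifold $M$, necessarily connected since $H_0(M;\mathbb{Z}_p)=\mathbb{Z}_p$, with $H_*(M;\mathbb{Z}_p)\cong H_*(S^n;\mathbb{Z}_p)$, where $n=\dim M$. If $n\leq 1$ there is nothing to prove, as every manifold of dimension $0$ or $1$ is orientable, so I would assume $n\geq 2$. By the classification of the top homology group of a closed connected $n$-manifold (see \cite{Bredon1972}), $M$ is orientable if and only if $H_n(M;\mathbb{Z})\neq 0$, in which case $H_n(M;\mathbb{Z})\cong\mathbb{Z}$; otherwise $H_n(M;\mathbb{Z})=0$. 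Hence it suffices to show $H_n(M;\mathbb{Z})\neq 0$.

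Since $M$ is compact its integral homology is finitely generated, so the universal coefficient theorem gives
$$
H_k(M;\mathbb{Z}_p)\cong\bigl(H_k(M;\mathbb{Z})\otimes\mathbb{Z}_p\bigr)\oplus\operatorname{Tor}\bigl(H_{k-1}(M;\mathbb{Z}),\mathbb{Z}_p\bigr).
$$
Applying this with $k=n-1$ and using $H_{n-1}(M;\mathbb{Z}_p)\cong H_{n-1}(S^n;\mathbb{Z}_p)=0$ (this is where $n\geq 2$ enters) yields $H_{n-1}(M;\mathbb{Z})\otimes\mathbb{Z}_p=0$; for a finitely generated abelian group this forces $H_{n-1}(M;\mathbb{Z})$ to have neither a free summand nor $p$-torsion, so $\operatorname{Tor}(H_{n-1}(M;\mathbb{Z}),\mathbb{Z}_p)=0$ as well. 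Feeding this into the universal coefficient formula for $k=n$ then gives $H_n(M;\mathbb{Z})\otimes\mathbb{Z}_p\cong H_n(M;\mathbb{Z}_p)\cong H_n(S^n;\mathbb{Z}_p)=\mathbb{Z}_p\neq 0$, whence $H_n(M;\mathbb{Z})\neq 0$ and $M$ is orientable by the criterion above.

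I do not expect a real obstacle here; the argument is a short exercise in homological algebra once the orientability criterion is in hand. The only mildly delicate point is the prime $p=2$: for odd $p$ one has the even quicker route that a non-orientable closed connected $n$-manifold satisfies $H_n(M;\mathbb{Z}_p)\cong\{r\in\mathbb{Z}_p:2r=0\}=0$, directly contradicting $H_n(M;\mathbb{Z}_p)=\mathbb{Z}_p$, whereas for $p=2$ that shortcut is unavailable and one genuinely needs the vanishing of $H_{n-1}(M;\mathbb{Z}_2)$ (equivalently $H^1(M;\mathbb{Z}_2)=0$, i.e.\ $w_1(M)=0$). Compactness of $M$, which is part of the notion of a homology sphere, is exactly what guarantees the finite generation used to pass from $H_{n-1}(M;\mathbb{Z})\otimes\mathbb{Z}_p=0$ to the vanishing of the corresponding $\operatorname{Tor}$ term.
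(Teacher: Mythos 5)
Your proof is correct and follows essentially the same route as the paper's: both rest on the criterion that a closed connected $n$-manifold is orientable iff $H_n(M;\mathbb{Z})\neq 0$, and both use the universal coefficient theorem to deduce $\operatorname{Tor}(H_{n-1}(M;\mathbb{Z}),\mathbb{Z}_p)=0$ from the vanishing of the mod-$p$ homology below the top degree and then to force $H_n(M;\mathbb{Z})\neq 0$ from $H_n(M;\mathbb{Z}_p)\cong\mathbb{Z}_p$. The only difference is cosmetic: you argue directly while the paper argues by contradiction (assuming $H_n(M;\mathbb{Z})=0$ and deriving $\mathbb{Z}_p\cong\operatorname{Tor}(H_{n-1}(M;\mathbb{Z}),\mathbb{Z}_p)=0$).
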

\begin{proof}
	Suppose $\Sigma$ is a $\mathbb{Z}_p$-homology sphere which is not orientable. This means $H_n(\Sigma)=0$, where $n=\dim\Sigma$. Using the Universal Coefficient Theorem, we have for $k=1,\ldots,n$ the following exact sequence
	$$
	0\rightarrow H_k(\Sigma)\otimes\mathbb{Z}_p\rightarrow H_k(\Sigma;\mathbb{Z}_p)\rightarrow\text{Tor}(H_{k-1}(\Sigma),\mathbb{Z}_p)\rightarrow 0
	$$
	Since $H_k(\Sigma;\mathbb{Z}_p)$ vanish for $k=1,\ldots,n-1$, we have 
	$$
	H_k(\Sigma)\otimes\mathbb{Z}_p=\text{Tor}(H_{k-1}(\Sigma),\mathbb{Z}_p)=0
	$$ 
	and consequently 
	\begin{equation}\label{e:1}
	H_k(\Sigma)\cong\mathbb{Z}_{q_{k,1}}\oplus\ldots\oplus\mathbb{Z}_{q_{k,l_k}} 
	\end{equation}
	where $\gcd(q_{k,i},p)=1$.\par
	For $k=n$, we have $H_n(\Sigma)=0$ by our assumption. Hence, the Universal Coefficient Theorem gives in this case the exactness of
	$$
	0\longrightarrow H_n(\Sigma;\mathbb{Z}_p)\longrightarrow\text{Tor}(H_{n-1}(\Sigma),\mathbb{Z}_p)\longrightarrow 0
	$$
	Since $H_n(\Sigma;\mathbb{Z}_p)\cong\mathbb{Z}_p$, we conclude that
	$$
	\text{Tor}(H_{n-1}(\Sigma),\mathbb{Z}_p)\cong\mathbb{Z}_p
	$$
	On the other hand, by (\ref{e:1}),
	\begin{align*}
	\text{Tor}(H_{n-1}(\Sigma),\mathbb{Z}_p)&\cong\text{Tor}(\mathbb{Z}_{q_{n-1,1}}\oplus\ldots\oplus\mathbb{Z}_{q_{n-1,l_{n-1}}},\mathbb{Z}_p)\\
	&\cong\text{Tor}(\mathbb{Z}_{q_{n-1,1}},\mathbb{Z}_p)\oplus\ldots\oplus\text{Tor}(\mathbb{Z}_{q_{n-1,l_{n-1}}},\mathbb{Z}_p)=0.
	\end{align*}
\end{proof}
We are ready now to establish the intersection number strategy of excluding one fixed point group actions on spheres.
\begin{theorem}\label{strategy:2}
\emph{	Let a group $G$ act on a $\mathbb{Z}$-homology sphere $\Sigma$ with $\Sigma^G$ connected. Assume there exist subgroups $H_1,H_2\leq G$ with $\langle H_1\cup H_2\rangle=G$ such that $C(H_i)$ are of positive dimension for $i=1,2$. Suppose there exists a $p$-group $P\leq H_1\cap H_2$ such that
	$$
	\dim C(H_1)+\dim C(H_2)=\dim\Sigma^P.
	$$
	and one of the following is true:
	\begin{enumerate}
		\item the orders of $H_1$ and $H_2$ are odd,
		\item $P$ is normal in $H_1$ and $H_2$ and the orders of quotient groups $H_1/P$ and $H_2/P$ are odd.
	\end{enumerate}
	Then $\Sigma^G$ is not a one point set.}
\end{theorem}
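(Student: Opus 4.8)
The plan is to argue by contradiction, turning a hypothetical one-point fixed set into an intersection number inside $\Sigma^P$ that is simultaneously forced to equal $\pm 1$ and to be divisible by $p$.

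Suppose $\Sigma^G=\{x_0\}$, and let $p$ be the prime with $|P|$ a power of $p$. By Smith theory $M:=\Sigma^P$ is a $\mathbb{Z}_p$-homology sphere, and by Lemma~\ref{l:2} it is orientable; fix an orientation and write $m=\dim M$. Since $\dim C(H_1),\dim C(H_2)>0$ and $\dim C(H_1)+\dim C(H_2)=m$, we have $m\ge 2$, so $M$ is connected and $C(P)=M$. I would then invoke Lemma~\ref{l:4} with ambient manifold $\Sigma$, the subgroups $H_1,H_2$, and $H=P$: its first hypothesis is our assumption $\langle H_1\cup H_2\rangle=G$, and its dimension hypothesis reads $\dim C(H_1)+\dim C(H_2)-\dim\Sigma^G=\dim C(P)$, which holds since $\dim\Sigma^G=0$ and $\dim C(P)=m$. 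Hence $\Sigma^{H_1}$ and $\Sigma^{H_2}$ are transverse in $M$. Put $A:=C(H_1)$, $B:=C(H_2)$, $a:=\dim A$, $b:=\dim B$, so that $a+b=m$ and $0<a,b<m$. Since $A\subseteq\Sigma^{H_1}$ and $B\subseteq\Sigma^{H_2}$, we get $A\cap B\subseteq\Sigma^{H_1}\cap\Sigma^{H_2}=\Sigma^{\langle H_1\cup H_2\rangle}=\{x_0\}$, while $x_0$ lies in both $A$ and $B$; so $A$ and $B$ meet transversally in the single point $x_0$.

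The substantive step — and the only place the alternatives (1) and (2) are used — is to check that $A$ and $B$ are orientable, so that the intersection number of Section~\ref{s:3} is available. The input I would cite or prove is that a finite group $K$ of odd order acting smoothly on a closed orientable manifold $N$ has orientable fixed-point components. Granting this, in case (1) I apply it to $H_i$ acting on $\Sigma$ (orientable, being a $\mathbb{Z}$-homology sphere), and in case (2) to $H_i/P$ acting on $\Sigma^P$, using $P\trianglelefteq H_i$ to see that $H_i/P$ acts on $\Sigma^P$ with $(\Sigma^P)^{H_i/P}=\Sigma^{H_i}$; either way $A$ and $B$ are orientable and I fix orientations. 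For the cited fact, the idea is that at a fixed point the normal representation $V$ satisfies $V^K=0$; an odd-order group admits no epimorphism onto $\mathbb{Z}/2$, and, by the vanishing of the Frobenius--Schur indicator on its nontrivial irreducibles, every nontrivial real irreducible $K$-representation is of complex type, so $\operatorname{Aut}_K(V)$ is a product of complex general linear groups, hence connected; therefore the normal bundle of the component has connected structure group, is orientable, and since $N$ is orientable the component is orientable too.

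With $A,B$ oriented, they are compact, connected, oriented submanifolds of complementary positive dimensions of the compact oriented manifold $M$, meeting transversally only at $x_0$, so the intersection number equals $\eta_{x_0}=\pm 1$. On the other hand, passing to $\mathbb{Z}_p$ coefficients, $(i_A)_*[A]\in H_a(M;\mathbb{Z}_p)=0$ because $0<a<m$ and $M$ is a $\mathbb{Z}_p$-homology $m$-sphere; hence the Poincar\'e dual $\alpha\in H^b(M;\mathbb{Z}_p)$ of $(i_A)_*[A]$ vanishes, and Theorem~\ref{t:7}, reduced modulo $p$, gives $A\cdot B\equiv\langle\alpha\cup\beta,[M]\rangle\equiv 0\pmod p$. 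This contradicts $A\cdot B=\pm 1$, so $\Sigma^G$ cannot be a one-point set. The main obstacle is the orientability statement in the third paragraph; everything else is assembling Lemma~\ref{l:4}, Lemma~\ref{l:2}, Smith theory, and Theorem~\ref{t:7}.
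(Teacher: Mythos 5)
Your proposal is correct and follows the same route as the paper: contradiction, transversality of $\Sigma^{H_1}$ and $\Sigma^{H_2}$ in $\Sigma^P$ via Lemma~\ref{l:4}, orientability of $\Sigma^P$ via Lemma~\ref{l:2} and of the $C(H_i)$ via the odd-order fixed-point theorem (which you additionally sketch a proof of, where the paper cites Bredon), and then Theorem~\ref{t:7} to contradict the intersection number $\pm 1$ at the unique fixed point. The only variation is the last step: you reduce mod $p$ and use $H_a(\Sigma^P;\mathbb{Z}_p)=0$ to get $C(H_1)\cdot C(H_2)\equiv 0\pmod p$, whereas the paper stays integral, noting that the middle-degree homology of $\Sigma^P$ is finite so the Poincar\'e duals are torsion and the cup product vanishes in $H^{\dim\Sigma^P}(\Sigma^P)\cong\mathbb{Z}$, giving $C(H_1)\cdot C(H_2)=0$ exactly; both yield the same contradiction.
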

\begin{proof}
	Assume $\Sigma^G$ consists of one point. We conclude from Lemma \ref{l:4} that $\Sigma^{H_1}$ and $\Sigma^{H_2}$ are transverse in $\Sigma^P$. Now, we show that $\Sigma^P$ is an orientable manifold. We know from Smith theory, that it is a $\mathbb{Z}_p$-homology sphere. Hence, by Lemma \ref{l:2}, we conclude that $\Sigma^P$ is orientable. Having this, notice that $\Sigma^{H_1}$ and $\Sigma^{H_2}$ are orientable too. Indeed, if $H_1$ and $H_2$ are of odd order, this follows directly from \cite{Bredon1972}[p. 175, 2.1 Theorem]. While the second case holds and the orders of the quotient groups $H_1/P$, $H_2/P$ are odd, we again apply this theorem to state that $\Sigma^{H_1}$ and $\Sigma^{H_2}$ are orientable, since $$\Sigma^{H_i}=\Big(\Sigma^P\Big)^{H_i/P}.$$
	Therefore we have a well-defined intersection number of $C(H_1)$ and $C(H_2)$ in $\Sigma^P$. From the proof of orientability of $\Sigma^P$ (\ref{l:2}), we deduce that all the homology groups $H_k(\Sigma^P)$ are finite for $k=1,\ldots,\dim\Sigma^P-1$. Since $\Sigma^P$ is orientable, the Poincare duality holds, 
	$
	H^k(\Sigma^P)\cong H_{n-k}(\Sigma^P).
	$
	As result $H^k(\Sigma^P)$ are finite for $k=1,\ldots,\dim\Sigma^P-1$. Denote by $c_1$ and $c_2$ the Poincare duals to $(i_{C(H_1)})_*([C(H_1)])$ and $(i_{C(H_2)})_*([C(H_2)])$ respectively. As the dimensions of $C(H_1)$ and $C(H_2)$ are positive, $c_i$ belongs $H^{k_i}(\Sigma^P)$ for $i=1,2$ with $k_i=1,\ldots,\Sigma^P-1$ and $k_1+k_2=\dim\Sigma^P$. $H^{k_i}(\Sigma^P)$ have only torsion part, therefore $c_1$ and $c_2$ are of finite order and $c_1\cup c_2\in H^{\dim\Sigma^P}(\Sigma^P)$ is zero.
	Hence, from Theorem \ref{t:7} we have
	$$
	C(H_1)\cdot C(H_2)=\langle c_1\cup c_2,[\Sigma^P]\rangle=\langle 0,[\Sigma^P]\rangle=0.
	$$
	On the other hand, by our supposition, $|\Sigma^G|=1$, and therefore the intersection number of $C(H_1)$ and $C(H_2)$ is either $1$ or $-1$, a contradiction.
\end{proof}
\section{Exclusion Algorithm}
In this paragraph we present the exclusion algorithm combining the discussed strategies.

Assume we are given a group $G$ and a positive integer $n$ for which we want to exclude an effective one fixed point action of $G$ on $S^n$.
Throughout, we assume that the postulated fixed point is $x\in S^n$. In order to obtain the exclusion algorithm, we perform the following steps.
\begin{enumerate}
	\item \emph{Determine all faithful real characters of $G$ of dimension $n$}
	\newline
	Since only faithful $\mathbb{R}G$-modules of dimension $n$ can occur as the $\mathbb{R}G$-module structure of $T_xS^n$, we can restrict to faithful real characters of $G$ of dimension $n$. Moreover we can rule out the trivial character since we have an isolated fixed point. We use a combinatoric approach to generate these characters. First, find the partitions of $n$ into summands being the dimensions of real nontrivial irreducibles. Next, for any partition $$n=a_1d_1+\ldots+a_md_m,$$ $d_1<\ldots<d_m$, we arrange all choices of real irreducibles $X_{i,1},\ldots,X_{i,a_i}$ for $i=1,\ldots,m$ such that $$X_{i,j}(1)=d_i$$ for $j=1,\ldots,a_i$. Now, it only remains to verify whether the obtained character, $\sum_{1\leq i\leq m}\sum_{1\leq j\leq a_i}X_{i,j}$, is faithful.
	\item \emph{Determine good subgroups triples}
	\newline
	In this step we look for $H_1, H_2$ - subgroups of $G$ and $P\leq H_1\cap H_2$ satisfying the conditions of either of the strategies - Theorem \ref{strategy:1} or Theorem \ref{strategy:2}, not concerning the dimension (they shall be checked later since these conditions depend on the module - character). In both situations, $\langle H_1, H_2\rangle=G$ and $P$ is a primary group. For the Theorem \ref{strategy:1} we expect $H_1,H_2$ to be mod-primary subgroups. In the case of Theorem \ref{strategy:2}, either the orders of $H_1$ and $H_2$ are odd, or $P$ is normal in both $H_1$ and $H_2$ and the orders of the quotient groups $H_i/P$ are odd for $i=1,2$. We collect the triples of such subgroups $(H_1,H_2,P)$ into two parts corresponding to a given strategy.
	\item \emph{Check the dimension conditions for characters}
	\newline
	For any faithful character generated in the first step, we consider every good subgroup triple for either the strategy. Once we fix such character $X$ and triple $(H_1,H_2,P)$, we check the dimension conditions. If they are satisfied for at least one of the strategies, we conclude that they are an obstacle to one fixed point action. If these obstacles are found for any character to be considered, we can exclude the one fixed point action. 
\end{enumerate}
If the conditions of Theorem \ref{strategy:1} or Theorem \ref{strategy:2} are satisfied, this algorithm provides the proof that there does not exist the postulated action. The algorithm has been implemented in GAP language. 
\newline
\newline
\emph{Note} We can use also a theorem from the paper of Morimoto, \cite{Morimoto2008}[Lemma 2.1], which asserts that if $x_0$ is the only fixed point of the action of $G$ on a $\mathbb{Z}$-homology sphere $\Sigma$ and $K$ is the intersection of all subgroups of $G$ of index $2$, then $\dim T_{x_0}\Sigma^K=0$. This condition restricts essentially the set of "candidating" $\mathbb{R}G$-modules.
\label{s:4}
\section{Results}
\label{s:5}
In introduction, we presented particular Oliver groups and dimensions for which the exclusion strategy ruled out one fixed point actions on spheres of these dimensions. Those results were obtained by GAP computations. The strategy was tested for all Oliver groups of order up to $216$, as well as for $A_5\times C_5$ and $A_5\times C_7$.

Let us see how the strategy works for the particular example of $G=A_5\times C_3$ for $n=6$.

$G$ can be presented as the subgroup of the symmetric group on eight letters, denoted by $S_8$. Namely, $G$ is generated by the following two elements of $S_8$: $\begin{pmatrix}
1 & 5 & 2&4&3 
\end{pmatrix}\begin{pmatrix}
6&8&7
\end{pmatrix}$ and $\begin{pmatrix}
1 & 4 & 2&5&3 
\end{pmatrix}\begin{pmatrix}
6&7&8
\end{pmatrix}$. $G$ has $21$ conjugacy classes of subgroups:
\begin{itemize}
	\item $d_1$ with representative $\{id\}$ isomorphic to the trivial group,
	\item $d_2$ with representative $\langle\begin{pmatrix}2&5\end{pmatrix}\begin{pmatrix}3&4\end{pmatrix}\rangle$ isomorphic to $C_2$,
	\item $d_{3,1},d_{3,2},d_{3,3}$ with representatives $\langle\begin{pmatrix}6&8&7\end{pmatrix}\rangle$, $\langle\begin{pmatrix}2&4&5\end{pmatrix}\rangle$ and 
	\newline
	$\langle\begin{pmatrix}2&4&5\end{pmatrix}\begin{pmatrix}6&8&7\end{pmatrix}\rangle$ respectively, isomorphic to $C_3$,
	\item $d_4$ with representative $\langle\begin{pmatrix}2&4\end{pmatrix}\begin{pmatrix}3&5\end{pmatrix}, \begin{pmatrix}2&4\end{pmatrix}\begin{pmatrix}3&4\end{pmatrix}\rangle$ isomorphic to 
	\newline
	$C_2\times C_2$,
	\item $d_5$ with representative $\langle\begin{pmatrix}1&3&4&5&2\end{pmatrix}\rangle$ isomorphic to $C_5$,
	\item $d_{6,1}$ with representative $\langle\begin{pmatrix}1&3\end{pmatrix}\begin{pmatrix}2&5\end{pmatrix},\begin{pmatrix}2&5&4\end{pmatrix}\rangle $ isomorphic to $D_6$,
	\item $d_{6,2}$ with representative $\langle\begin{pmatrix}2&5\end{pmatrix}\begin{pmatrix}3&4\end{pmatrix},\begin{pmatrix}6&8&7\end{pmatrix}\rangle $ isomorphic to $C_6$,
	\item $d_9$ with representative $\langle\begin{pmatrix}2&4&5\end{pmatrix},\begin{pmatrix}6&8&7\end{pmatrix}\rangle$ isomorphic to $C_3\times C_3$,
	\item $d_{10}$ with representative $\langle\begin{pmatrix}2&3\end{pmatrix}\begin{pmatrix}4&5\end{pmatrix},\begin{pmatrix}1&2&5&4&3\end{pmatrix}\rangle$ isomorphic to $D_{10}$,
	\item $d_{12,1},d_{12,2},d_{12,3}$ with representatives 
	\newline
	$\langle\begin{pmatrix}2&4&5\end{pmatrix},\begin{pmatrix}2&3\end{pmatrix}\begin{pmatrix}4&5\end{pmatrix},\begin{pmatrix}2&5\end{pmatrix}\begin{pmatrix}3&4\end{pmatrix}\rangle$, 
	\newline
	$\langle\begin{pmatrix}2&4&5\end{pmatrix}\begin{pmatrix}6&8&7\end{pmatrix},\begin{pmatrix}2&3\end{pmatrix}\begin{pmatrix}4&5\end{pmatrix},\begin{pmatrix}2&5\end{pmatrix}\begin{pmatrix}3&4\end{pmatrix}\rangle$ and 
	\newline
	$\langle\begin{pmatrix}2&4&5\end{pmatrix}\begin{pmatrix}6&7&8\end{pmatrix},\begin{pmatrix}2&3\end{pmatrix}\begin{pmatrix}4&5\end{pmatrix},\begin{pmatrix}2&5\end{pmatrix}\begin{pmatrix}3&4\end{pmatrix}\rangle$ respectively, isomorphic to $A_4$,
	\item $d_{12,4}$ with representative 
	\newline
	$\langle\begin{pmatrix}2&4\end{pmatrix}\begin{pmatrix}3&5\end{pmatrix},\begin{pmatrix}2&5\end{pmatrix}\begin{pmatrix}3&4\end{pmatrix},\begin{pmatrix}2&4&5\end{pmatrix}\begin{pmatrix}6&8&7\end{pmatrix}\rangle$ isomorphic to
	\newline
	$C_6\times C_2$,
	\item $d_{15}$ with representative $\langle\begin{pmatrix}1&3&4&5&2\end{pmatrix},\begin{pmatrix}6&8&7\end{pmatrix}\rangle$ isomorphic to $C_{15}$,
	\item $d_{18}$ with representative $\langle\begin{pmatrix}1&3\end{pmatrix}\begin{pmatrix}4&5\end{pmatrix},\begin{pmatrix}2&5&4\end{pmatrix},\begin{pmatrix}6&8&7\end{pmatrix}\rangle$ isomorphic to $D_{6}\times C_3$,
	\item $d_{30}$ with representative 
	\newline
	$\langle\begin{pmatrix}1&5\end{pmatrix}\begin{pmatrix}3&4\end{pmatrix}\begin{pmatrix}6&8&7\end{pmatrix},\begin{pmatrix}1&4\end{pmatrix}\begin{pmatrix}2&5\end{pmatrix}\begin{pmatrix}6&8&7\end{pmatrix}\rangle$ isomorphic to
	\newline
	$D_{10}\times C_3$,
	\item $d_{36}$ with representative $\langle\begin{pmatrix}2&4&5\end{pmatrix},\begin{pmatrix}2&4\end{pmatrix}\begin{pmatrix}3&5\end{pmatrix},\begin{pmatrix}6&8&7\end{pmatrix}\rangle$ isomorphic to $A_4\times C_3$,
	\item $d_{60}$ with representative $\langle\begin{pmatrix}1&3&4&5&2\end{pmatrix},\begin{pmatrix}1&4&3&5&2\end{pmatrix}\rangle$ isomorphic to $A_5$,
	\item $d_{180}$ with the only representative isomorphic to $G$.
\end{itemize}
The following table shows the fixed point dimensions of the characters for all subgroups of $G$:
\begin{center}
	\begin{tabular}{l|l*{9}{c}r}
		&  $d_{1}$& $d_{2}$ & $d_{3,1}$ & $d_{3,2}$ & $d_{3,3}$  & $d_{4}$ & $d_{5}$&$d_{6,1}$ & $d_{6,2}$&$d_9$&$d_{10}$  \\
		\hline
		$X_{2}$ & 2 & 2 & 0 & 2 & 0 & 2 & 2&2&0&0&2 \\
		$X_{3,1}$   & 3 & 1 & 3 & 1 & 1 & 0 & 1&0&1&1&0 \\
		$X_{3,2}$   & 3 & 1 & 3 & 1 & 1 & 0 & 1&0&1&1&0 \\
		$X_{6,1}$    & 6 & 2 & 0 & 2 & 2 & 0 & 2&0&0&0&0 \\
		$X_{6,2}$     & 6 & 2 & 0 & 2 & 2 & 0 & 2&0&0&0&0\\
		$X_4$     & 4 & 2 & 4 & 2 & 2 & 1 & 0&1&2&2&0 \\
		$X_8$     & 8 & 4 & 0 & 4 & 2 & 2 & 0&2&0&0&0 \\
		$X_5$     & 5 & 3 & 5 & 1 & 1 & 2 & 1&1&3&1&1 \\
		$X_{10}$      & 10 & 6 & 0 & 2 & 4 & 4 & 2&2&0&0&2 \\
	\end{tabular}
\end{center}
\begin{center}
	\begin{tabular}{l|l*{8}{c}r}
		&  $d_{12,1}$& $d_{12,4}$ & $d_{12,2}$ & $d_{12,3}$ & $d_{15}$  & $d_{18}$ & $d_{30}$&$d_{36}$ & $d_{60}$&$d_{180}$\\
		\hline
		$X_{2}$ & 2 & 0 & 0 & 0 & 0 & 0 & 0&0&2&0 \\
		$X_{3,1}$   & 0 & 0 & 0 & 0 & 1 & 0 & 0&0&0&0 \\
		$X_{3,2}$   & 0 & 0 & 0 & 0 & 1 & 0 & 0&0&0&0 \\
		$X_{6,1}$    & 0 & 0 & 0 & 0 & 0 & 0 & 0&0&0&0 \\
		$X_{6,2}$     & 0 & 0 & 0 & 0 & 0 & 0 & 0&0&0&0\\
		$X_4$     & 1 & 1 & 1 & 1 & 0 & 1 & 0&1&0&0 \\
		$X_8$     & 2 & 0 & 0 & 0 & 0 & 0 & 0&0&0&0 \\
		$X_5$     & 0 & 2 & 0 & 0 & 1 & 1 & 1&0&0&0 \\
		$X_{10}$      & 0 & 0 & 2 & 2 & 0 & 0 & 0&0&0&0 \\
	\end{tabular}
\end{center}
Now, we analyze all the possible characters which can occur as characters of tangent module to $S^6$ at the fixed point and exclude the one fixed point actions for them. We have the following characters.
\begin{itemize}
		\item $X=X_{6,i}$ for $i=1,2$; in this case take 
		\begin{eqnarray*}
			H_1&=&\langle\begin{pmatrix}2&5\end{pmatrix}\begin{pmatrix}3&4\end{pmatrix},\begin{pmatrix}6&8&7\end{pmatrix}\rangle\in d_{6,2}, \\
			H_2&=&\langle\begin{pmatrix}1&5&4\end{pmatrix},\begin{pmatrix}6&8&7\end{pmatrix}\rangle\in d_9,\\
			P&=&\langle\begin{pmatrix}6&8&7\end{pmatrix}\rangle\in d_{3,1}
		\end{eqnarray*}
		We see that $P$ is contained in $H_1\cap H_2$. $H_1$ and $H_2$ are  mod-$p$-cyclic and they generate $G$. From the fixed dimension table, we see that $\dim X^P=0$ and we exclude the effective one-fixed point action by the discrete submodules restriction.
		\item $X=X_2+X_4$; we take 
		\begin{eqnarray*}
			H_1&=&\langle\begin{pmatrix}2&4&5\end{pmatrix}\rangle\in d_{3,2},\\
			H_2&=&\langle\begin{pmatrix}1&3&2\end{pmatrix},\begin{pmatrix}6&8&7\end{pmatrix}\rangle\in d_{3,3},\\
			P&=&\{id\}\in d_1
		\end{eqnarray*}
		As in the previous case, $\langle H_1\cup H_2\rangle = G$, so that we can apply the intersection number restriction, since $$\dim X^{H_1}+\dim X^{H_2}=4+2=6=\dim X^P.$$
\end{itemize}
\section{Conclusions}
The exclusion algorithm works well for the case where there are relatively few faithful $\mathbb{R}G$-modules of a given dimension. It is often the case that the big amount of such modules generates situations which do not fit for either strategy. However, for low dimensions, such as from $6$ up to $10$, there are few faithful modules and in many cases, we are able to exclude effective one fixed point actions. Even this is very interesting, since for the current state of knowledge we don't know whether a particular Oliver group can act with one fixed point on some sphere of low dimensions varying from $6$ to $10$. For many groups we were able to exclude effective one fixed point actions on $S^6$. As an example one can give $A_5\times C_p$ for $p=3,5,7$. Since the dimension $7$ was not ruled out, one could try to construct an effective one fixed point action on $S^7$ for these groups. If it was the case, we would know the minimal dimension of a sphere on which $A_5\times C_p$ can act effectively with one fixed point for $p=3,5,7$.
\section*{Acknowledgements}
The authors would like to thank Prof. Krzysztof Pawa\l{}owski for the support and interest in our research. We are also very thankful to participants of the algebraic topology seminar held at Adam Mickiewicz University, for helpful comments on the topic of this paper.

\bibliographystyle{plain}

\end{document}